\newcommand{\R}{\mathbb R}
\newcommand{\N}{\mathbb N}
\newcommand{\bpic}[4]{\beginpicture
  \setcoordinatesystem units  <1pt,1pt>
  \setplotarea x from #1 to #2, y from #3 to #4}
\newcommand{\epic}{\endpicture}
\newcommand{\bull}[2]{\put{$\bullet$} at #1 #2 }
\newtheorem{theorem}{Theorem}
\newtheorem{example}{Example}
\newtheorem{note}{Note}
\newcommand{\cd}{{\cal D}}
\title{\bf A hidden Condorcet domain in Loday's realisation of the associahedron}
\author{Arkadii Slinko}
\date{}
\begin{document}

\maketitle

\begin{abstract}
We prove that Loday's polytopal realisation of the $n$th Tamari lattice, called associahedron, has $2^{n-1}$  common points with the permutohedron, which form a maximal never-middle (symmetric) Condorcet domain.
\end{abstract}

\section{Introduction}

The origin of Condorcet domains is in voting theory. A Condorcet domain is a set of linear orders on a given set of alternatives such that, if every voter is known to have a preference from that set, the pairwise majority relation is acyclic (see \cite{puppe2024maximal} for a recent survey). Condorcet domains viewed as a set of permutations have deep relations to various branches of combinatorics \citep{GR:2008,DKK:2012,labbe2020cambrian,slinko2024combinatorial}.
In this note we present one more combinatorial connection of Condorcet domains formulated in the abstract.

\cite{DanilovK13} introduced one operation on Condorcet domains which we will use here. For any two Condorcet domains $\cd_1, \cd_2$ we set
\[
\cd_1\star \cd_2 =\{u_1u_2 \mid u_1\in \cd_1, u_2\in \cd_2 \}\cup \{u_2u_1 \mid u_1\in \cd_1,\ u_2\in \cd_2 \}
\]
is the operation used in \cite{DanilovK13}. The composition allowed them to construct a series of maximal Condorcet domains, namely,
\[
\cd=(\ldots ((a_1\star a_2)\star a_3)\star\ldots)\star a_n,
\]
where $a_i\in [n]$ is identified with the trivial domain on a single alternative $a_i$. The latter domain is known \citep{karpov2023symmetric} that such domain satisfies for each triple $i<j<k$ the never condition $kN_{\{i,j,k\}}2$ (never-middle one) or in terms of pattern avoidance $\cd$ includes all permutations of $S_n$ that avoid patterns $132$ and $231$.

\section{Permutations and binary trees.}

The group of permutations $S_n$ of order $n$ is in a bijection with the set of full binary trees with levels on leaves labelled $0,1,\ldots,n$. Every internal vertex is assigned a level as shown on Figure~\ref{fig:tree_levels}. 
\begin{figure}[h]
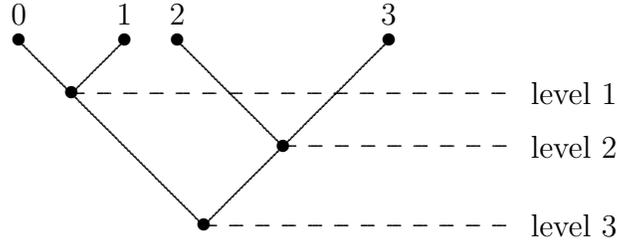

\centering
$$\bpic{-50}{50}{0}{80}
\bull{0}{0}
\bull{30}{30}
\bull{-50}{50}
\bull{-70}{70}
\put{$0$} at -70 80
\bull{-30}{70}
\put{$1$} at -30 80
\bull{70}{70}
\put{$3$} at 70 80
\bull{-10}{70}
\put{$2$} at -10 80
\setlinear
\plot 30 30 0 0 -50 50 -70 70 /
\plot -50 50 -30 70 /
\plot 70 70 30 30 -10 70 /
\setdashes
\plot 30 30 120 30 /
\plot -50 50 120 50 /
\plot 0 0 120 0 /
\put{level $1$} at 140 50
\put{level $2$} at 140 30
\put{level $3$} at 140 0
\epic$$
\caption{\label{fig:tree_levels} Binary tree with levels}
\end{figure}
A full binary tree with levels corresponds to the permutation 
\[
\sigma(i)=\text{the lowest level on the unique path from $i-1$ to $i$}.
\]
For example, the tree on Figure~\ref{fig:tree_levels} corresponds to the transposition $(2,3)$.

Let $Y_n$ be the set of full binary trees with $n + 1$ leaves $0,1,\ldots,n$:
\begin{center}
\includegraphics[height=2cm]{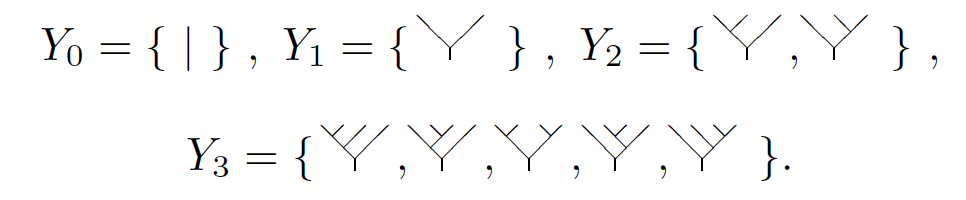}
\end{center}
Using bijection of $S_n$ with trees with levels and forgetting the levels we obtain a surjective mapping
\[
\psi\colon S_n\to Y_n.
\] 
For example,
\[
\psi((1,3,2))=\includegraphics[height=7mm]{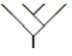}.
\]
An $n$th Tamari lattice $T_n$ \citep{tamari1962algebra}, is a partially ordered set in which the elements consist of different ways of parenthesization of a product of $n$ elements in an associative algebra. For $n=4$ we have five ways to calculate the product $abcd$, namely,
\[
((ab)c)d=(ab)(cd)=(a(bc))d= a((bc)d)= a(b(cd)).
\]
One parenthesization is ordered before another if the second parenthesization may be obtained from the first by only rightward applications of the associative law replacing $(xy)z$ with $x(yz)$. For example, the fourth Tamari lattice $T_4$ is
\begin{center}
\includegraphics[height=6cm]{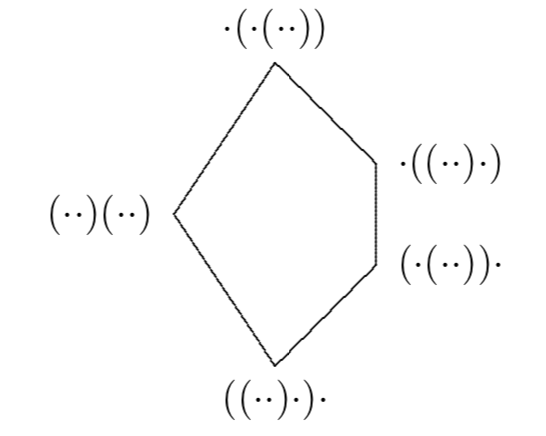}
\end{center}
So $Y_n$ can be obviously identified with the elements of the Tamari lattice. Indeed, suppose we have a bijections  $\mu_i\colon Y_i\to T_i$ for $i=1,\ldots,n-1$. Then given a tree 
$t\in T_n$ we consider its left and right branches $t_\ell$ and $t_r$, with $k$ and $n-k$ leaves, respectively, and put in correspondence to $t$ the parenthesization $(\mu_k(t_\ell))(\mu_{n-k}(t_r))$. 
This bijection induces the structure of a lattice to $Y_n$. 

\section{Permutohedron and associahedron}

We view a permutation $u\in S_n$ 
as a vector $(u(1),u(2),\ldots, u(n))\in \R^n$ and sometimes as a sequence $u(1)u(2)\ldots u(n)$. Geometrically, the permutations of $[n]$ form a set of $n!$ points in $\N^n$.
According to \cite{ziegler2012lectures} the {\em permutohedron} of order $n$ is an $(n - 1)$-dimensional polytope embedded in an $n$-dimensional space. Its vertex coordinates (labels) are the permutations of $[n]$, it is denoted as $\text{Perm}_n$.

The skeleton of a polytope is the graph structure given by the vertices and edges of the polytope. Given a graph $G$, its {\em polytopal realisation} is a polytope $P$ whose skeleton is isomorphic to $G$. 

{\em Associahedron} is a polytopal realisation of the Tamari lattice. Associahedra are also called Stasheff polytopes \citep{stasheff1963homotopy} after the work of Jim Stasheff who was the first to give their polytopal realisations.  The following beautiful polytopal realisation is due to \cite{loday2004}.
As an element of the Tamari lattice $T_n$ can be identified with a full binary tree in $Y_n$ with leaves $0,1,\ldots,n$, we label the internal vertices of that tree so that the $i$th vertex is the one which falls in between the leaves $i-1$ and $i$. Then we
denote by $a_i$, respectively, $b_i$, the number of leaf descendants of the left child, respectively, right
child, of the $i$th vertex. The product $w(i)=a_ib_i$ is called the {\em weight} of the $i$th
vertex. To the tree $t$ in $Y_n$ we associate the point $M_n(t) \in \R^n$ whose $i$th
coordinate is the weight of the $i$th vertex, that is, $M_n\colon Y_n\to \R^n$ such that
\[
M_n(t)=(a_1b_1, a_2b_2,\ldots, a_nb_n)\in \R^n.
\]
\begin{theorem}[Loday, 2004]
The polytope with vertices $\{M_n(t)\mid t\in Y_n\}$ is a polytopal realisation of the Tamari lattice $T_n$, i.e., an associahedron, denoted $\text{Asso}_n$.
\end{theorem}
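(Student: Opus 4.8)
\noindent The plan is to realise $\text{conv}\{M_n(t)\}$ by linear inequalities, show it is a simple polytope whose vertices are exactly the $M_n(t)$, and then recover the Tamari order from the edges. Label the internal vertices of $t\in Y_n$ by $1,\dots,n$ as in the statement ($i$-th one lying between leaves $i-1$ and $i$), and for $x\in\R^n$ and $1\le p\le q\le n$ set $\ell_{p,q}(x)=\sum_{i=p}^{q}x_i-\binom{q-p+2}{2}$. I would first prove the hyperplane lemma, $\sum_{i=1}^n a_ib_i=\binom{n+1}{2}$ for all $t$, by double counting: $a_ib_i$ is the number of leaf-pairs whose lowest common ancestor is the $i$-th internal vertex, and every one of the $\binom{n+1}{2}$ leaf-pairs has a unique such ancestor. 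The same count restricted to the vertices $p,\dots,q$ (the ancestor of any two leaves among $p-1,\dots,q$ again lies among $p,\dots,q$) gives $\ell_{p,q}(M_n(t))\ge0$, with equality exactly when the leaves $p-1,\dots,q$ are precisely those of a subtree of $t$; I will call such $[p,q]$ a \emph{spanning interval} of $t$. Define $P_n$ to be the intersection of the hyperplane $\{\sum x_i=\binom{n+1}{2}\}$ with the half-spaces $\{\ell_{p,q}\ge0\}$ ranging over proper intervals $[p,q]\subsetneq[1,n]$. Then $\text{conv}\{M_n(t)\}\subseteq P_n$, and the singleton inequalities bound each coordinate, so $P_n$ is a polytope.

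\noindent The next step, and the crux, is to show the vertices of $P_n$ are exactly the $M_n(t)$. The key lemma is a non-interference property: if two proper intervals $I,J$ have leaf-sets that are neither nested nor disjoint, then $\ell_I$ and $\ell_J$ cannot both vanish at a point of $P_n$ — this follows by writing $\ell_I+\ell_J-\ell_{I\cup J}-\ell_{I\cap J}$ as a positive difference of binomial coefficients of leaf-counts (positive because $m\mapsto\binom m2$ is strictly convex and the pair of leaf-counts of $I,J$ is strictly majorized by that of their union and intersection), and then adding the valid inequalities $\ell_{I\cup J}\ge0$ and $\ell_{I\cap J}\ge0$. Hence at every point of $P_n$ the tight inequalities come from a laminar family of leaf-intervals. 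I would combine this with linear independence: the indicators of the spanning intervals of a fixed tree, together with $\mathbf 1=\mathbf 1_{[1,n]}$, are independent (order internal vertices so ancestors precede descendants; the incidence matrix is then triangular, since $\mathbf 1_{[p,q]}$ is supported on the descendants of the vertex producing $[p,q]$). Since a tree has exactly $n-1$ proper spanning intervals, $M_n(t)$ is the unique point of the ambient hyperplane on those $n-1$ facet hyperplanes, hence a vertex of $P_n$; it follows that $\dim P_n=n-1$, that every proper $\ell_{p,q}$ is facet-defining, and that $P_n$ is simple. Conversely, a vertex $v$ lies on at least $n-1$ facets whose leaf-intervals form a laminar family, and completing the containment forest of such a family with singleton leaves identifies it, when it has full rank, with the spanning intervals of a unique full binary tree $t\in Y_n$; then $v$ and $M_n(t)$ solve the same $n$ independent linear equations, so $v=M_n(t)$. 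This gives $P_n=\text{conv}\{M_n(t)\}=\text{Asso}_n$ with $M_n$ a bijection onto the vertices.

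\noindent Finally I would read off the skeleton. Because $P_n$ is simple of dimension $n-1$, two vertices are joined by an edge iff they lie on $n-2$ common facets, i.e. $M_n(t)$ and $M_n(t')$ are adjacent iff $t$ and $t'$ share $n-2$ spanning intervals. A single rightward application of associativity $(xy)z\rightsquigarrow x(yz)$ inside a tree removes exactly one spanning interval (the leaves of $xy$) and adds exactly one (the leaves of $yz$), leaving the rest fixed, so every covering pair of the Tamari order is an edge of $P_n$. Since such moves correspond to the $n-1$ internal edges of the tree, the resulting graph is $(n-1)$-regular, as is the skeleton of the simple polytope $P_n$; a spanning $(n-1)$-regular subgraph of an $(n-1)$-regular graph is the whole graph, so the skeleton of $\text{Asso}_n$ is precisely the Hasse diagram of the Tamari lattice $T_n$.

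\noindent\textbf{The hard part} is the middle step: ruling out spurious vertices of $P_n$. The non-interference lemma is what keeps $P_n$ simple, and the combinatorial identification of a full-rank laminar interval family with the spanning intervals of a tree is the real content. It is also where one must resist the natural temptation to decompose a facet of $P_n$ as a product of two smaller Loday associahedra via $M$: this holds only up to combinatorial equivalence, not on the nose, because contracting a subtree to one fat leaf shifts the constants of the outer inequalities. Everything else is double counting and elementary linear algebra.
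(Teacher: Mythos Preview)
The paper does not actually prove this theorem: it is stated with attribution and a citation to Loday (2004) and then used as a black box, so there is no ``paper's own proof'' to compare against. What you have written is essentially a compressed reconstruction of Loday's original argument --- the double count of leaf-pairs by lowest common ancestor to obtain the hyperplane identity $\sum_i a_ib_i=\binom{n+1}{2}$ and the facet inequalities $\ell_{p,q}\ge 0$, the non-interference of overlapping intervals via strict convexity of $m\mapsto\binom{m}{2}$, the identification of a maximal laminar family of leaf-intervals with the spanning intervals of a unique tree, and the final regularity count matching the $(n-1)$-regular Tamari rotation graph with the skeleton of a simple $(n-1)$-polytope. As a proof of Loday's theorem it is correct in outline and close to the original in spirit.

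One point deserves a sentence of care. In the non-interference lemma you invoke $\ell_{I\cup J}\ge 0$ and $\ell_{I\cap J}\ge 0$ as ``valid inequalities'' of $P_n$. When the leaf-intervals of $I$ and $J$ meet in a single leaf, the corresponding interval of internal vertices is empty; you then need $\ell_\emptyset\equiv 0$ (consistent with $\binom{1}{2}=0$) rather than a facet inequality. Likewise $I\cup J$ may be all of $[1,n]$, in which case $\ell_{I\cup J}=0$ on the ambient hyperplane rather than on a facet. Both cases are harmless once stated, and the majorization inequality remains strict because every internal-vertex interval carries at least two leaves. With that bookkeeping in place your sketch goes through.
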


\begin{example}
\label{ex:vertices_T3}
For the tree $t$ on Figure~\ref{fig:tree_levels} we have $M_3(t)=(1,4,1)$ and $M_3(\includegraphics[height=5mm]{one_tree.png})=(3,1,2)$. Figure~\ref{fig:Loday_coord} shows the coordinates of $\text{Asso}_3$.
\begin{center}
\begin{figure}[h]
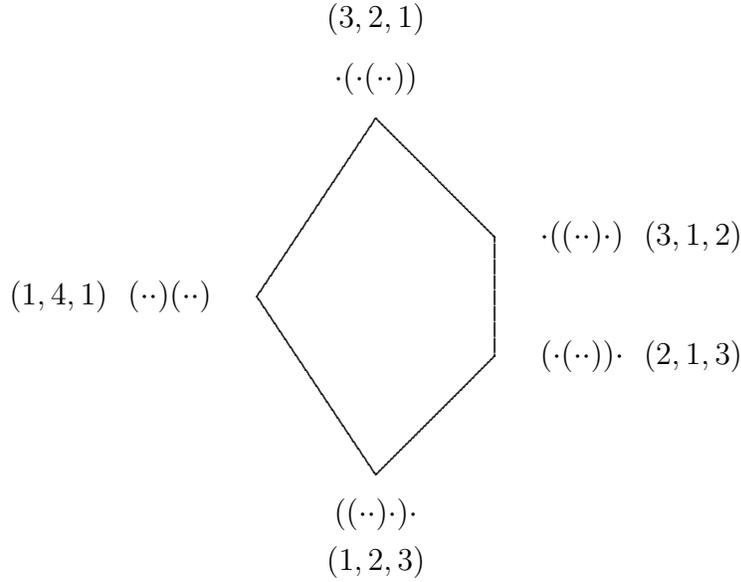

\beginpicture
  \setcoordinatesystem units  <1.5pt,1.5pt>
  \setplotarea x from -150 to 50, y from -30 to 100
\setlinear
\plot 0 0 30 30 30 60 0 90 -30 45 0 0 /
\put{$((\cdot\cdot)\cdot)\cdot$} at 0 -10
\put{$\cdot(\cdot(\cdot\cdot))$} at 0 100
\put{$(\cdot\cdot)(\cdot\cdot)$} at -52 45
\put{$\cdot((\cdot\cdot)\cdot)$} at 52 60
\put{$(\cdot(\cdot\cdot))\cdot$} at 52 30
\put{$(1,2,3)$} at 0 -22
\put{$(2,1,3)$} at 80 30
\put{$(3,1,2)$} at 80 60
\put{$(1,4,1)$} at -80 45
\put{$(3,2,1)$} at 0 115
\endpicture
\caption{\label{fig:Loday_coord} Loday's coordinates of the $\text{Asso}_3$}
\end{figure}
\end{center}
\end{example}

\section{Main result}

Example~\ref{ex:vertices_T3} shows that some vertices of $\text{Perm}_n$ and $\text{Asso}_n$ are in common. We are interested in the intersection of these two polytopes.

\begin{theorem}
$\text{Perm}_n\cap \text{Asso}_n=(\ldots (1\star 2)\star 3)\star \ldots) \star n$ is the maximal never-middle Condorcet domain of size $2^{n-1}$.
\end{theorem}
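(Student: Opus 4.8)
The plan is to establish the displayed equality by induction on $n$, reading $\text{Perm}_n\cap\text{Asso}_n$ as the set of points that are vertices of \emph{both} polytopes; since every permutation vector is a vertex of $\text{Perm}_n$, this set is simply $\{v\in S_n : (v(1),\ldots,v(n))=M_n(t)\text{ for some }t\in Y_n\}$. (This is the only sensible reading: both polytopes lie in the hyperplane $\sum_i x_i=\binom{n+1}{2}$ — recall $\sum_i a_ib_i=\binom{n+1}{2}$, the number of leaf pairs of $t$, each counted once at its lowest common ancestor — and in fact $\text{Perm}_n\subseteq\text{Asso}_n$, Loday's polytope being cut out by the permutohedron inequalities indexed by intervals $[i,j]$ only.) Once this vertex set is shown to equal $\cd_n:=(\ldots(1\star2)\star\ldots)\star n$ and to have $2^{n-1}$ elements, the remaining assertions are exactly what the Introduction recalls: $\cd_n$ is a maximal Condorcet domain \cite{DanilovK13}, and it is the never-middle one, obeying $kN_{\{i,j,k\}}2$ for every triple $i<j<k$, equivalently consisting of the $132$- and $231$-avoiding permutations \cite{karpov2023symmetric}.

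The crux is to establish a recursive description of the trees $t$ with $M_n(t)\in S_n$, as follows. For $t\in Y_n$ with $n\ge2$, let its root be the internal vertex $\ell$; then the left branch holds leaves $0,\ldots,\ell-1$ and internal vertices $1,\ldots,\ell-1$, the right branch holds leaves $\ell,\ldots,n$ and internal vertices $\ell+1,\ldots,n$, and the root has weight $w(\ell)=\ell(n+1-\ell)$. Since $\ell(n+1-\ell)-n=-(\ell-1)(\ell-n)\ge0$ for $1\le\ell\le n$, with equality exactly when $\ell\in\{1,n\}$, the vector $M_n(t)$ can be a permutation of $[n]$ only if $\ell=1$ or $\ell=n$, and then $w(\ell)=n$. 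Moreover the weight of an internal vertex inside a branch is computed within that branch alone, so relabelling the branch that is not a lone leaf as a tree $t'\in Y_{n-1}$ leaves the other $n-1$ coordinates intact: $M_n(t)=(n,M_{n-1}(t'))$ if $\ell=1$ (with $t'$ the right branch) and $M_n(t)=(M_{n-1}(t'),n)$ if $\ell=n$ (with $t'$ the left branch). Conversely, every $t'\in Y_{n-1}$ with $M_{n-1}(t')\in S_{n-1}$ extends, in precisely these two ways, to a $t\in Y_n$ with $M_n(t)\in S_n$.

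Granting this, I would close the induction as follows. For $n=1$, $Y_1$ is the single tree with $M_1(t)=(1)$, and $\cd_1=\{(1)\}$. For the inductive step, the description above gives
\[
\{M_n(t):t\in Y_n\}\cap S_n=\{(n,v):v\in A\}\cup\{(v,n):v\in A\},\qquad A:=\{M_{n-1}(t'):t'\in Y_{n-1}\}\cap S_{n-1},
\]
and $A=\cd_{n-1}$ by the inductive hypothesis, so the right-hand side equals $\cd_{n-1}\star n=\{vn:v\in\cd_{n-1}\}\cup\{nv:v\in\cd_{n-1}\}=\cd_n$. For $n\ge2$ the two parts of the union are disjoint (one has last entry $n$, the other first entry $n$), so $|\cd_n|=2\,|\cd_{n-1}|$, whence $|\cd_n|=2^{n-1}$.

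The step I expect to demand the most care is the middle one: keeping straight which internal vertex is the root, recalling that a branch consisting of a single leaf contributes no internal vertex, and checking that re-indexing a branch as an element of $Y_{n-1}$ preserves every weight — all alongside the small inequality $\ell(n+1-\ell)\ge n$ that pins the root to an end of the tree. Everything else is immediate from Loday's theorem (that the points $M_n(t)$ are exactly the vertices of $\text{Asso}_n$) and from the cited facts about $\cd_n$.
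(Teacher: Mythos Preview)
Your proposal is correct and follows essentially the same argument as the paper: induction on $n$, with the key observation that the root's weight $\ell(n+1-\ell)$ is at least $n$ (equivalently, the paper's $(i+1)(n-i)\le n$ forces $i\in\{0,n-1\}$), pinning the root to an end and reducing to a subtree in $Y_{n-1}$ whose weights are unchanged. Your write-up is a bit more explicit in places---you start the induction at $n=1$ rather than $n=3$, you derive $|\cd_n|=2^{n-1}$ directly from the recursion rather than citing it, and you address the interpretation of $\text{Perm}_n\cap\text{Asso}_n$ as common vertices (noting, correctly, that $\text{Perm}_n\subseteq\text{Asso}_n$ as polytopes)---but the substance is the same.
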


\begin{proof}
As seen in Example~\ref{ex:vertices_T3}, for $n = 3$ the common vertices of the two polytopes are the points corresponding to the permutations belonging to maximal Condorcet domain
\[
\cd_{3,2}=\{123, 213, 312, 321\}=(1\star 2)\star 3.
\]
We see that $3$ can never be in the middle, hence the condition $3N_{\{1,2,3\}}3 $ is satisfied.
Inductively, suppose that 
\begin{equation}
\label{ind_hyp}
\text{Perm}_{n-1}\cap \text{Asso}_{n-1}=(\ldots (1\star 2)\star 3)\star \ldots) \star (n-1).
\end{equation}

Let $t\in Y_n$ and consider $M_n(t)\in \text{Asso}_n$. Consider the root $v$ of this tree and its weight $w(v)$. We have $w(v)=(i+1)(n-i)$ for some $i\in \{0,\ldots,n-1\}$. If $M_n(t)\in \text{Perm}_n$, we must have $w(v)\le n$. Thus, 
\[
w(v)=(i+1)(n-i)\le n,
\]
from which $i=0$ or $i=n-1$. Thus $t$ is equal to one of the following trees:
\begin{center}
$$\bpic{-50}{50}{0}{80}
\bull{0}{0}
\bull{-70}{70}
\put{$0$} at -70 80
\bull{-21}{21}
\put{$n{-}1$} at 30 80
\bull{70}{70}
\put{$n$} at 70 80
\bull{30}{70}
\put{$t_{-n}$} at -20 60
\setlinear
\plot 30 30 0 0 -50 50 -70 70 /
\plot -21 21 30 70 /
\plot 70 70 30 30 / 
\bull{200}{0}
\bull{130}{70}
\put{$0$} at 130 80
\bull{220}{20}
\put{$1$} at 168 80
\bull{270}{70}
\put{$n$} at 270 80
\bull{168}{70}
\put{$t_{-n}$} at 220 60
\setlinear
\plot 130 70 200 0 270 70 /
\plot 168 70 220 20 /
\epic$$
\end{center}
where $t_{-n}\in Y_{n-1}$ and $M_{t-1}(t_{-n})\in \text{Perm}_{n-1}\cap \text{Asso}_{n-1}$. This means that  
\[
M_n(t)= n \ldots \quad \text{or}\quad M_n(t)=\ldots n,
\]
respectively. We note that the weights for the vertices of $t_{n-1}$ are exactly the same as they were in $t_n$, Hence the induction hypothesis \eqref{ind_hyp} can be used and 
\[
\text{Perm}_{n}\cap \text{Asso}_{n}=(\ldots (1\star 2)\star 3)\star \ldots) \star (n-1)\star n.
\]
It is known \citep{karpov2023symmetric} that such domain is maximal and satisfies for each triple $i<j<k$ the never condition $kN_{\{i,j,k\}}2$ (never-middle one) and has cardinality $2^{n-1}$.
This proves the theorem.
\end{proof}

\begin{note}
The never condition $kN_{\{i,j,k\}}2$ (never-middle one) can be reformulated in terms of pattern avoidance. This will mean that $\text{Perm}_{n}\cap \text{Asso}_{n}$ consists of all permutations of $S_n$ that avoid patterns $132$ and $231$.
\end{note}

\begin{example} 
For $n = 4$ we get the domain
\[
\cd=((1\star 2)\star 3)\star 4=\{1 2 3 4, 2 1 3 4, 3 1 2 4, 3 2 1 4, 4 1 2 3, 4 2 1 3, 4 3 1 2, 4 3 2 1\}.
\]
\end{example}

\bibliographystyle{plainnat}
\bibliography{../cps}

\end{document}